\numberwithin{equation}{section}
\newtheorem{thm}[equation]{Theorem}
\newtheorem{cor}[equation]{Corollary}
\newtheorem{lem}[equation]{Lemma}
\newcounter{mycount}
\newenvironment{romlist}{\begin{list}{\rm(\roman{mycount})}
   {\usecounter{mycount}\labelwidth=1cm\itemsep 0pt}}{\end{list}}
\def\EE{{\mathbb E}}
\def\NN{{\mathbb N}}
\def\RR{{\mathbb R}}
\def\ZZ{{\mathbb Z}}
\def\CC{{\mathbb C}}
\def\a{\alpha}
\def\dd{{\,\mathrm d}}
\def\b{\beta}
\def\t{\tau}
\def\om{\omega}
\def\be{\begin{equation}}
\def\ee{\end{equation}}
\def\e{e}
\def\O{{\mathrm O}}
\def\oo{\infty}
\def\q{\quad}
\def\qq{\qquad}
\def\bc{\beta_\text{c}}
\def\bt{\beta_\tau}
\def\fl2#1{{\lfloor#1/2\rfloor}}
\def\cl2#1{{\lceil#1/2\rceil}}
\def\X{X}
\def\saw{self-avoiding walk}
\begin{document}
\title{Borel type bounds for the self-avoiding walk connective constant}
\author{B.\ T.\ Graham\\
   {\small DMA--\'Ecole Normale Sup\'erieure, 45 rue d'Ulm}\\
   {\small 75230 Paris Cedex 5, France}\\
   {\small \tt graham@dma.ens.fr}
}
\maketitle
\begin{abstract}
\noindent Let $\mu$ be the self-avoiding walk connective constant on $\ZZ^d$. We show that the asymptotic expansion for $\bc=1/\mu$ in powers of $1/(2d)$ satisfies Borel type bounds. This supports the conjecture that the expansion is Borel summable.
\\
\noindent
{\bf Keywords} Self-avoiding walk, Lace expansion, asymptotic series, Borel summability.
\\
\noindent
{\bf Mathematics Subject Classification (2000)} 60K35; 82B41.
\end{abstract}
\section{Introduction}
Let $\ZZ^d$ denote the hypercubic lattice, with nearest neighbour edges.
A \saw\ of length $n$ is a sequence of points $\om_0,\om_1,\dots,\om_n$ in $\ZZ^d$ such that $|\om_{i}-\om_{i+1}|=1$ and for $i\not=j$, $\om_i\not=\om_j$. Let $c_n$ denote the number of self-avoiding walks, up to translation invariance, of length $n$ on $\ZZ^d$. It is well known that the limit $\mu(d)=\lim_{n\to\oo} c_n^{1/n}$ exists; the limit is called the connective constant.
Fisher and Gaunt calculated that \cite{FisherGaunt}
\[
\mu=2d-1-1/(2d)-3/(2d)^2-16/(2d)^3-102/(2d)^4-\dots
\]
However, it is not clear from their calculation that the expansion can be continued to higher orders. Moreover, in low dimensions numerical extrapolation techniques are needed to use the expansion to estimate $\mu$. 

The \saw\ is most easily understood in high dimensions. As $d\to\oo$, paths with large loops become relatively rare. It is therefore useful to consider a walk with only local self-avoidance. Say that $\om_0,\dots,\om_n$ is a memory-$\tau$ \saw\ if $\om(i)\not=\om(j)$ for $0<|i-j|\le\tau$.
Let $c_n^{(\tau)}$ denote the number of $n$-step memory-$\tau$ walks, up to translation invariance, and let $\mu_\tau(d)=\lim_{n\to\oo} (c_n^{(\tau)})^{1/n}$.
Using memory-4 self-avoiding walks as a starting point (taking into account loops of size 2 and 4) Kesten showed that \cite{KestenSAWii}
\[
\mu(d)=2d-1-1/(2d)+\O(1/d^2).
\]
The lace expansion is a powerful technique for exploring the properties of the \saw\ in dimensions $d>4$; we refer the reader to \cite{SladeLectureNotes} for a recent introduction.
We will look at the quantity $\bc=1/\mu(d)$; $\beta_c$ is the radius of convergence of the Lace expansion generating function. For convenience, let $s=1/(2d)$. A series expansion exists for $\bc$ in powers of $s$  \cite{HaraSladeHighDExpansions}
\[
\bc(s)=\sum_{n=1}^\oo \a_n s^n=s+s^2+2s^3+ 6s^4+27s^5+157s^6+\dots 
\]
Expansions in powers of $1/d$ have been developed for many other models in statistical physics, such as the Ising model \cite{FisherGaunt}, percolation \cite{GauntRuskin}, lattice animals and the $n$-vector model \cite{GerberFisher}. Finding the coefficients of the expansion is normally computationally intensive. It is often even more difficult to determine the basic properties of the expansion. What is the radius of convergence? Is it an asymptotic expansion?  Can the expansion be interpreted as a Borel sum?

In the case of the \saw\ it is not known, but it is widely believed, that the radius of convergence is zero. Using the Lace expansion, it has been shown that the series expansion is asymptotic \cite{HaraSladeHighDExpansions}. 
We will show that the partial sums satisfy Borel type bounds. Borel summability raises the prospect of calculating $\bc$ from the series expansion even if the radius of convergence is zero. 
\begin{thm}\label{SAWbound} There exist a constant $C_1$ such that for all $d$,
\begin{eqnarray}\label{SAWboundEqn}
\left|\bc(s)-\sum_{n=1}^{M-1} \a_n s^n\right| \le C_1^M s^M M!, \q M=1,2,\dots
\end{eqnarray}
\end{thm}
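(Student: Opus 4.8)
The natural starting point is the lace expansion for the self-avoiding walk, which expresses $\bc$ as the radius of convergence of the two-point generating function, equivalently as the solution of a fixed-point equation of the schematic form $\bc = s\,F(\bc, s)$, where $F$ is built from the lace-expansion coefficients $\Pi_m(\bc)$. The plan is to control the coefficients $\a_n$ of the formal power series $\bc(s) = \sum \a_n s^n$ by extracting them recursively from this fixed-point relation, and to propagate the factorial bound $|\a_n| \le C^n (n-1)!$ (or $\le C^n n!$) through the recursion by induction on $n$. Once such a bound on the coefficients is available, the theorem follows by a standard argument: one first establishes that the series is asymptotic with the \emph{explicit} remainder estimate $|\bc(s) - \sum_{n=1}^{M-1}\a_n s^n| \le K_M s^M$ for each fixed $M$ (this is essentially the content of \cite{HaraSladeHighDExpansions}), and then checks that the constants $K_M$ produced by that argument themselves grow no faster than $C_1^M M!$.

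Concretely, I would proceed as follows. First, recall from the lace expansion the identity (valid for $s$ small, i.e.\ $d$ large) relating $\bc$, $s$, and the lace-expansion quantities, together with the diagrammatic bounds on $\Pi_m(\bc)$ that show each diagram of order $m$ contributes a factor of size $O(s)^{m}$ roughly; these bounds are what make the series asymptotic. Second, I would set up the combinatorial recursion for the $\a_n$: differentiating or iterating the fixed-point equation expresses $\a_n$ as a finite sum of products $\a_{n_1}\cdots \a_{n_k}$ with $n_1+\cdots+n_k < n$, weighted by bounded combinatorial coefficients counting lace diagrams. Third — and this is where the real work lies — I would show by induction that if $|\a_j| \le C^{j}(j-1)!$ for all $j<n$, then the same holds for $j = n$; the key analytic fact needed is that a convolution-type sum $\sum_{n_1+\cdots+n_k = n}(n_1-1)!\cdots(n_k-1)!$ is dominated by $(n-1)!$ times a geometric factor, so that the factorial growth is stable under the nonlinear recursion provided $C$ is chosen large enough to absorb the geometric losses and the number of diagrams of each order.

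The main obstacle I anticipate is precisely the bookkeeping in that inductive step: the lace expansion coefficient $\Pi_m$ is itself an infinite sum (over diagram topologies and over the internal lengths of the subwalks), and one must show that summing the factorial bounds over \emph{all} such contributions — not just finitely many — still yields a convergent bound of the form $C_1^M M!$. This requires combining two separate sources of control: the diagrammatic/combinatorial bound that the number of lace diagrams of order $m$ grows only exponentially (not factorially) in $m$, and the random-walk estimate that each "bubble" or loop in a diagram contributes a small factor (a power of $s$, or more precisely a bounded multiple of the return probability, which is $O(s)$). A clean way to organize this is to work with a generating-function / majorant-series argument: introduce the formal majorant $\hat\bc(s) = \sum C^n (n-1)! s^n$, show it is a super-solution of the majorized fixed-point equation, and conclude $|\a_n| \le C^n(n-1)!$ by comparison. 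One should also verify the base cases $M=1,2$ directly against the explicit expansion $\bc = s + s^2 + 2s^3 + \cdots$ quoted above, and check that the argument is uniform in $d$ (equivalently in $s \in (0, s_0]$ for some fixed $s_0>0$), since the statement of the theorem asks for a single constant $C_1$ valid for all $d$.
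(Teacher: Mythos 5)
Your plan for bounding the coefficients $\a_n$ is sound and close to what the paper actually does: the paper also starts from the lace-expansion fixed point $\bt=s(1-\hat{\Pi}_{\bt}(0;\tau))$, extracts $\a_n$ explicitly (via Lagrange--B\"urmann inversion rather than a majorant super-solution, but your comparison-series idea would serve the same purpose), and proves $|\a_n|\le C_2^n n!$ by combining a factorial bound on the lace-graph counts with a lemma on powers of factorial generating functions. The genuine gap is in what you dismiss as the ``standard argument'' that follows. Because the series has radius of convergence (conjecturally) zero, a bound on the coefficients says nothing by itself about the remainder $\bc(s)-\sum_{n=1}^{M-1}\a_n s^n$; and the Hara--Slade asymptoticity result only supplies some constant $K_M$ for each fixed $M$, with no control on its growth in $M$. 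Tracking that growth is the actual content of the theorem, not a routine verification, and it cannot be uniform in the way you suggest: the lace-expansion machinery is only usable when $sM$ is small. One therefore needs the dichotomy that for $s\ge C_5/M$ the claimed bound follows trivially from $|\a_n|\le C_2^n n!$ and $s\le\bc\le 2s$ (this is also how ``for all $d$'' is obtained, since small $d$ means $s$ bounded below), while for $s\le C_5/(M+1)$ one runs a separate induction on $M$ for the remainder itself.

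That induction requires ingredients absent from your outline: (a) truncation to finite memory $\tau=2M$ together with Kesten's quantitative comparison $0\le\bc-\bt\le s^{M+2}C_{\mathrm{K}}^M M!$, so that the sum over lace-graph lengths becomes finite; (b) a diagrammatic estimate on $\hat{\pi}_a^{(N)}(0;\tau)$ organised by the \emph{length} $a$ of the graph and not merely by its order $N$ (the paper's Lemma 5.1), which is what permits discarding graphs of length $a\ge 2M$ and of order $N\ge M+1$ at a cost of $\tfrac14 s^M C_6^{M+1}(M+1)!$ each; and (c) an exact cancellation step showing that, after substituting the inductively known expansion of $\bt$ into the finitely many remaining lace terms, all powers of $s$ below $s^M$ cancel against $\sum_{n\le M}\a_n s^{n-1}$ --- this is where the explicit reversion formula for $\a_n$ is used a second time, and the surviving terms must then be regrouped and estimated with the factorial generating-function lemma. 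Without (a)--(c) your proposal establishes only the coefficient bound $|\a_n|\le C^n n!$, which is the paper's Lemma 4.1, not the stated remainder estimate.
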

\noindent The motivation for Theorem \ref{SAWbound} is discussed in Section \ref{sec:borel}. 
In Section \ref{sec:reversion}, we use the Lace expansion to derive a formula for the $\a_n$. This formula is used in Section \ref{factorial_bounds} to control the growth of the $\a_n$ as $n\to\oo$. In Section \ref{sec:diag}, we consider the diagrammatic estimates for the Lace expansion. Finally, in Section \ref{sec:proof}, we prove Theorem \ref{SAWbound}.

\section{Borel summability and the spherical model\label{sec:borel}}
In light of Theorem~\ref{SAWbound}, it is natural to ask if $\bc$ can be recovered from the $\a_n$ by means of a Borel sum. Let $B$ denote the Borel transform of the asymptotic expansion for $\bc$; $B$ is well defined (see Lemma \ref{bound_an}) in a neighbourhood of zero by
\[
B(t)=\sum_{n=1}^\oo \a_n t^n / n!
\]
We conjecture that $B$ can be extended analytically to a neighbourhood of the positive real axis, and that $\bc(s)$ is equal to the Borel sum
\[
\sum_\text{Borel} \a_n s^n:=\frac{1}{s}\int_0^\oo e^{-t/s} B(t)\dd t.
\]
There are two reasons for making this conjecture.
Firstly, with $R>0$, let $C_R:=\{z\in\CC:\text{Re } z^{-1}>R^{-1}\}$ denote the open disc in $\CC$ with centre $R/2$ and diameter $R$ \cite[Figure 1]{SokalBorel}. Suppose that $\bc$ can be extended to an analytic function on $C_R$ such that \eqref{SAWboundEqn} holds for all $s\in C_R$. Under this assumption, the Borel sum is well defined in $C_R$ and equal to $\bc$ \cite{SokalBorel}. Unfortunately, it is not clear how to extend $\bc$ to an analytic function on $C_R$. Interpreting the Borel sum remains an open problem. 

Secondly, there is the case of the spherical model \cite{GerberFisher}. Physicists consider both the \saw\ and the spherical model to be special cases of the $n$-vector model. The spherical model is an exactly solvable spin system with dependence between the spins. Let $I_0$ denote the $0$th-order modified Bessel function of the first kind. The critical point is
\[
K_c(d)=\frac{1}{2} \int_0^\oo e^{-g(x)d} \dd x, \qq g(x)=x-\log I_0(x).
\]
Function $g$ is monotonic, $g(0)=0$ and $g(x)\to\oo$ as $x\to\oo$. Let $(a_n)$ denote the sequence with generating function $g$-inverse,
\[
g^{-1}(t)=\sum_{n=1}^\oo a_n t^n.
\]
Integration by parts yields an asymptotic expansion for $K_c(d)$ in powers of $1/d$,
\[
K_c(d)=\frac12\sum_{n=1}^\oo \frac{a_n n!}{d^n}.
\]
We have said that very little is known rigorously about $1/d$ expansions. This is the exception that proves the rule.
The radius of convergence is zero, but the expansion can be interpreted as a Borel sum with Borel transform $g^{-1}/2$.

Note that the $(a_n)$ oscillate.
The first $12$ coefficients are positive, the next $8$ are negative; the pattern of signs goes 
\[
12,\  8,\   9,\   9,\   9,\   9,\   9,\   9,\   9,\  10,\   9,\   9,\   9,\   9,\   9,\   9,\   9,\   9,\   9,\   9,\   9,\   9,\  10,\   9,\   9,\   9,\   9,\   9, \dots
\]
This oscillation is related to the fact that $g^{-1}$ has no poles on the positive real axis. The coefficients $\a_n$ for the \saw\ also show a change of sign. For $i=1,2,\dots,11$, $\a_i$ is positive; the remaining coefficients that are known, $\a_{12}$ and $\a_{13}$, are negative \cite{SladeEnumeration}.

\section{The expansions $\bt=\sum_{n=1}^\oo \a_n s^n$\label{sec:reversion}}
The starting point in our analysis will be \cite[(2.2)]{HaraSladeHighDExpansions}. Let $\bt=1/\mu_\tau$, and take $\b_\oo=\bc$. When $d$ is sufficiently large, for $\t$ finite and infinite,
\be\label{identity}
\bt=s(1-\hat{\Pi}_{\bt}(0;\tau)).
\ee
The quantity $\hat{\Pi}_{\bt}(0;\tau)$ is the Fourier transform of the memory-$\tau$ Lace expansion at $k=(0,\dots,0)$ and $\b=\bt$; essentially, it describes the difference between the generating functions of simple random walk and \saw.
The Lace expansion can be thought of as a series of inclusion/exclusion terms. 
A type-$N$ lace graph is a simple walk with at least $N$ points of self-intersection, and certain additional constraints. Let $\pi^{(N)}_a(x;\t)$ count the number of memory-$\t$ lace graphs from $0$ to $x$ of type $N$ and length $a$. Then
\[
\hat{\pi}^{(N)}_a(k;\tau)=\sum_{x\in\ZZ^d}  \pi^{(N)}_a(x;\tau) e^{-i k\cdot x},\qq 
\hat{\Pi}^{(N)}_\b(k;\tau)=\sum_{a=N+1}^\oo \hat{\pi}^{(N)}_a(k;\t) \b^n,
\]
\[
\text{and finally}\qq \hat{\Pi}_{\b}(k;\tau)= \sum_{N=1}^\oo (-1)^N \hat{\Pi}^{(N)}_\b(k;\tau) .
\]
The definition of a lace graph respects the symmetries of the underlying lattice.
There are $2^d d!$ ways of choosing an ordered orthonormal basis for $\RR^d$ from the set $\ZZ^d$. 
Each lace graph in $\ZZ^d$ with dimensionality $D$ is equivalent to $2d(2d-2)\dots(2d-2D+2)$ other lace graphs under the action of this group of symmetries. 

Let $f_\t(a,N,D)$ count the number of equivalence classes of memory-$\t$ lace graphs in $\ZZ^D$ with length $a$, type $N$ and dimensionality $D$. Lace graphs with dimensionality $D$ have length at least $2D$. Therefore, we can write the number of memory-$\t$ lace graphs of length $a$ and type $N$ in $\ZZ^d$ as a polynomial in powers of $s^{-1}=2d$,
\[
\sum_{D=1}^\fl2{a} f_\t(a,N,D)\, 2d(2d-2)\dots(2d-2D+2)=\sum_{b=\cl2{a}}^{a-1} c_{a,b,N} s^{b-a}.
\]
Let $I=\{(a,b):b=1,2,\dots;a=b+1,\dots,2b\}$ and set
\[
c_{a,b}=\sum_{N=1}^\oo (-1)^{N+1} c_{a,b,N},\qq (a,b)\in I.
\]
The $c_{a,b}$ depend implicitly on $\t$, but $c_{a,b}$ is fixed once $\t\ge a$.
Using this notation to rewrite \eqref{identity} yields a formal power series,
\begin{eqnarray}\label{iterative_formula}
\bt&=&s\Big[1+\sum_{(a,b)\in I} \bt^a c_{a,b} s^{b-a}\Big]\\
   &=&s[1+\b_\t^2 c_{2,1} s^{-1}+\b_\t^3 c_{3,2} s^{-1}+\b_\t^4 (c_{4,3} s^{-1}+c_{4,2} s^{-2})+\dots]\nonumber
\end{eqnarray}
Plugging ``$\bt=0$'' into the right hand side gives ``$\bt=s$''. Taking ``$\bt=s$'' and plugging it back into the right hand side then gives ``$\bt=s+c_{2,1}s^2+(c_{3,2}+c_{4,2})s^3+\dots$''; iterating in this way yields a series expansion for $\bt$:
\[
\bt= s+c_{2,1}s^2 + (2c_{2,1}^2+c_{3,2}+c_{4,2})s^3 + \dots
\]
The coefficient $\a_n$ of $s^n$ in $\bt$ is thus a multinomial function of the $c_{a,b}$.
\begin{lem}\label{an_bound}
With $S_n:=\{(n_{a,b})\in\NN^I:n=1+\sum_I b n_{a,b}\}$, 
\[
\a_n=\sum_{(n_{a,b})\in S_n}
\frac{[\sum_I a n_{a,b}]!}{[\,\prod_I n_{a,b}!][1+\sum_I (a-1)n_{a,b} ]!}
\prod_I c_{a,b}^{n_{a,b}}. 
\]
\end{lem}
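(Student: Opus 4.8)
The plan is to extract $\a_n$ from the fixed-point equation \eqref{iterative_formula} by an application of the Lagrange inversion formula. Write $F(w):=1+\sum_{(a,b)\in I} c_{a,b}\, s^{b-a}\, w^a$, so that \eqref{iterative_formula} reads $\bt = s\,F(\bt)$. Since $F(0)=1\neq 0$, this is a standard functional equation of the form $\bt = s\,\phi(\bt)$ only after a mild rearrangement; the clean way is to set $\bt = s\,u$ where $u = F(su)$, or more directly to apply Lagrange inversion to $w = s\,F(w)$ treating $s$ as the small parameter. The Lagrange inversion formula then gives
\[
\a_n = [s^n]\,\bt = \frac1n\,[w^{n-1}]\,F(w)^n
     = \frac1n\,[w^{n-1}]\Big(1+\sum_{(a,b)\in I} c_{a,b}\, s^{b-a}\, w^a\Big)^{\!n}.
\]
The subtlety is that $F$ itself depends on $s$ through the factors $s^{b-a}$, so one must be careful that the coefficient extraction in $w$ is consistent with the grading in $s$; this is handled by the bookkeeping that each factor $c_{a,b} s^{b-a} w^a$ carries $s$-degree $b-a$ and $w$-degree $a$, and the constant $1$ carries $w$-degree $0$ and $s$-degree $0$, so that a monomial $\prod_I (c_{a,b} s^{b-a} w^a)^{n_{a,b}}$ appearing in $F(w)^n$ with $1\cdot(n-\sum_I n_{a,b})$ copies of the constant term contributes $w$-degree $\sum_I a\,n_{a,b}$ and, after dividing by $n$ and reading off $[s^n]$, the correct power of $s$.

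Carrying this out: expand $F(w)^n$ by the multinomial theorem over the choice of how many of the $n$ factors contribute the constant $1$ and how many contribute each term $c_{a,b} s^{b-a} w^a$. A multi-index $(n_{a,b})_{(a,b)\in I}$ with $\sum_I n_{a,b}\le n$ contributes the multinomial coefficient $n!\big/\big[(n-\sum_I n_{a,b})!\,\prod_I n_{a,b}!\big]$, a factor $\prod_I c_{a,b}^{n_{a,b}}$, a power of $s$ equal to $\sum_I (b-a)n_{a,b}$, and a power of $w$ equal to $\sum_I a\,n_{a,b}$. Extracting $[w^{n-1}]$ forces $\sum_I a\,n_{a,b} = n-1$, whence $n - \sum_I n_{a,b} = n - 1 - \sum_I (a-1)n_{a,b}$, which is the first factorial in the denominator of the claimed formula. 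Combining this with the overall $1/n$ and simplifying $n!/n = (n-1)!$ against the extra... in fact one finds the $1/n$ is absorbed because $\big(\sum_I a\,n_{a,b}\big)! = (n-1)!$ exactly when the $[w^{n-1}]$ constraint holds, and tracking the substitution $n = 1 + \sum_I (a-1)n_{a,b} + \sum_I n_{a,b}$ shows $\sum_I b\,n_{a,b} = \sum_I a\,n_{a,b} - \sum_I (a-1)\cdot 0\ldots$; more cleanly, the $s$-power $n + \sum_I(b-a)n_{a,b}$ equals $n$ precisely when $\sum_I (a-b)n_{a,b}=0$, and since $a\le 2b$ and the $w$-constraint gives $\sum_I a\,n_{a,b}=n-1$, one substitutes to obtain the index set $S_n=\{(n_{a,b})\in\NN^I : n = 1+\sum_I b\,n_{a,b}\}$. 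After this substitution every power of $s$ automatically equals $s^n$ and the $1/n$ cancels, leaving exactly
\[
\a_n = \sum_{(n_{a,b})\in S_n} \frac{\big[\sum_I a\,n_{a,b}\big]!}{\big[\prod_I n_{a,b}!\big]\big[1+\sum_I (a-1)n_{a,b}\big]!}\prod_I c_{a,b}^{n_{a,b}}.
\]

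The main obstacle I anticipate is the second-order bookkeeping: making fully rigorous the claim that the formal power series obtained by the iteration described after \eqref{iterative_formula} genuinely coincides, order by order in $s$, with the Lagrange-inversion output, given that the "coefficients" $c_{a,b}s^{b-a}$ are themselves not constants but carry negative powers of $s$. One must check that for fixed $n$ only finitely many multi-indices in $S_n$ are nonempty (this follows since $b\ge 1$ forces $\sum_I n_{a,b}\le n-1$, and for each bounded total weight only finitely many $(a,b)\in I$ can occur because $I$ restricts $a\le 2b$), so that $\a_n$ is a genuine finite multinomial expression in the $c_{a,b}$ and no convergence issue arises at the level of formal power series. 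The verification that the two denominators match up — equating the multinomial "number of constant factors" $n-\sum_I n_{a,b}$ with $1+\sum_I(a-1)n_{a,b}$ — is the one routine computation I would present in detail, since everything else is the standard Lagrange inversion template.
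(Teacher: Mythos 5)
Your overall strategy --- Lagrange inversion applied to $\bt=s\,F(\bt)$ followed by a multinomial expansion --- is exactly the paper's, and you correctly identify both the key subtlety (the ``coefficients'' $c_{a,b}s^{b-a}$ carry powers of $s$) and the reason each $S_n$ contributes only finitely many terms. But the execution contains a real error: the displayed identity $\a_n=[s^n]\bt=\frac1n[w^{n-1}]F(w)^n$ is false. Lagrange inversion gives $\bt=\sum_{k\ge1}\frac{s^k}{k}[w^{k-1}]F(w)^k$, and since $[w^{k-1}]F(w)^k$ still contains negative powers of $s$ (note $b-a<0$ on $I$), the coefficient of $s^n$ receives contributions from every $k\ge n$, not just $k=n$. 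Your identification of the Lagrange index $k$ with $n$ then propagates into two mutually inconsistent constraints: $\sum_I a\,n_{a,b}=n-1$ (from $[w^{n-1}]$) and $\sum_I(a-b)n_{a,b}=0$ (to make the total $s$-power equal $n$). Because $a>b$ for every $(a,b)\in I$, the second forces all $n_{a,b}=0$, compatible with the first only when $n=1$; the intermediate identity ``$n-\sum_I n_{a,b}=n-1-\sum_I(a-1)n_{a,b}$'' is likewise not correct. You land on the right final formula, but the derivation as written does not produce it.

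The repair is the bookkeeping you gesture at but do not carry out: keep $k$ free. In the $k$-th Lagrange term the multinomial expansion runs over $T_k=\{(n_{a,b}):\sum_I a\,n_{a,b}=k-1\}$, the monomial indexed by $(n_{a,b})$ carries total $s$-power $k+\sum_I(b-a)n_{a,b}$, and extracting $[s^n]$ selects $n=k+\sum_I(b-a)n_{a,b}$. Eliminating $k=1+\sum_I a\,n_{a,b}$ between these two relations yields $n=1+\sum_I b\,n_{a,b}$, i.e.\ exactly $(n_{a,b})\in S_n$, and the coefficient becomes
\[
\frac{1}{k}\cdot\frac{k!}{\bigl(k-\sum_I n_{a,b}\bigr)!\,\prod_I n_{a,b}!}
=\frac{\bigl[\sum_I a\,n_{a,b}\bigr]!}{\bigl[1+\sum_I(a-1)n_{a,b}\bigr]!\,\prod_I n_{a,b}!},
\]
using $k-1=\sum_I a\,n_{a,b}$ and hence $k-\sum_I n_{a,b}=1+\sum_I(a-1)n_{a,b}$. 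With this correction your argument coincides with the paper's proof.
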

\noindent It is a corollary of Lemma~\ref{an_bound} that $\a_n$ only depends on $\t$ if $\t<2n-2$. 
\begin{proof}[Proof of Lemma~\ref{an_bound}]
Let $\phi(\b)=1+\sum_I c_{a,b} \b^a s^{b-a}$.
Setting $\b=\sum_{n=1}^\oo \a_n s^n$, \eqref{iterative_formula} becomes
\[
\frac{\b}{\phi(\b)}=s.
\]
We will use the notation $[\b^n]\phi(\b)$ to denote the coefficient of $\b^n$ from generating function $\phi(\b)$. 
The Lagrange--B\"urmann series reversion formula states that,
\[
\b=\sum_{k=1}^\oo \frac{s^k}{k}  [\b^{k-1}]\phi(\b)^k.
\]
Applying the formula yields,
\begin{eqnarray*}
\sum_{n=1}^\oo\a_n s^n&=&\sum_{k=1}^\oo \frac{s^k}{k} [\b^{k-1}]\biggl(1+\sum_I c_{a,b} \b^a s^{b-a}\biggr)^k
\end{eqnarray*}
and so with $T_k:=\{(n_{a,b})\in \NN^I: k-1=\sum_I an_{a,b}\}$,
\[
\a_n=[s^n] \sum_{k=1}^\oo \frac{s^k}{k} 
\sum_{(n_{a,b})\in T_k}
 \frac{k!}{(k-\sum_I n_{a,b})!\prod_I [n_{a,b}!]} \prod_I\Big[ c_{a,b} s^{b-a}\Big]^{n_{a,b}}.
\]
Extracting the coefficient of $s^n$ leaves only the terms with $n=k+\sum_I (b-a)n_{a,b}$. 
By the definition of $T_k$, these are the terms with $(n_{a,b})\in S_n$.
\end{proof}

\section{Factorial bounds on $(\a_n)$\label{factorial_bounds}}
We can use Lemma~\ref{an_bound} to bound the coefficients $(\a_n)$ of the asymptotic expansion.
\begin{lem}\label{bound_an}
There is a constant $C_2$ such that for all $n$, $|\a_n|\le  C_2^n n!$
\end{lem}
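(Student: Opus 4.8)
The plan is to feed the explicit formula of Lemma~\ref{an_bound} into a generating‑function estimate: first pull the factorial $n!$ out of the multinomial coefficient, then control what remains exponentially in $n$ by means of a bound on the lace‑graph counts $c_{a,b}$. For the first step, fix $n$ and a term $(n_{a,b})\in S_n$, so that $\sum_I b\,n_{a,b}=n-1$. Every pair $(a,b)\in I$ satisfies $b+1\le a\le 2b$: the inequality $a\le 2b$ gives $\sum_I a\,n_{a,b}\le 2(n-1)$, so the numerator $[\sum_I a\,n_{a,b}]!$ in Lemma~\ref{an_bound} is at most $(2n-2)!$, while $a-1\ge b$ gives $\sum_I(a-1)n_{a,b}\ge n-1$, so the factorial $[1+\sum_I(a-1)n_{a,b}]!$ in the denominator is at least $n!$. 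Since $(2n-2)!/n!\le(2n)!/n!=\binom{2n}{n}\,n!\le 4^n\,n!$, the combinatorial coefficient is at most $4^n n!/\prod_I n_{a,b}!$, whence
\[
|\a_n|\le 4^n\,n!\sum_{(n_{a,b})\in S_n}\frac{1}{\prod_I n_{a,b}!}\prod_I|c_{a,b}|^{\,n_{a,b}}.
\]

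For the second step, the residual sum is the $(n-1)$st coefficient of an exponential generating function: with $x$ a formal indeterminate,
\[
\sum_{n\ge1}\Bigl(\,\sum_{(n_{a,b})\in S_n}\frac{\prod_I|c_{a,b}|^{n_{a,b}}}{\prod_I n_{a,b}!}\Bigr)x^{n-1}=\prod_{(a,b)\in I}\exp\bigl(|c_{a,b}|\,x^b\bigr)=\exp\bigl(h(x)\bigr),\qquad h(x):=\sum_{(a,b)\in I}|c_{a,b}|\,x^b,
\]
the product being well defined as a formal power series because only finitely many of its factors affect any fixed coefficient. If $h$ has positive radius of convergence $\rho$, then $e^{h}$ is analytic on $|x|<\rho$ and Cauchy's estimate on the circle $|x|=\rho/2$ gives $[x^{n-1}]e^{h}\le(2/\rho)^{n-1}\sup_{|x|=\rho/2}e^{|h(x)|}\le C_3^{\,n}$ for a suitable constant $C_3$; combined with the displayed bound on $|\a_n|$ this yields $|\a_n|\le(4C_3)^n\,n!$, which is the claim with $C_2:=4C_3$.

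It remains to verify that $h$ converges near $0$, and this is where the real content lies. For each fixed $b$ the index $a$ runs over only the $b$ values $b+1,\dots,2b$, so convergence holds as soon as $|c_{a,b}|\le K^{a}$ for some absolute constant $K$ (the coefficient of $x^b$ in $h$ is then at most $b\,K^{2b}$). Such an exponential bound on the number of lace graphs is precisely what the diagrammatic estimates for the lace expansion deliver, and I would obtain it either from Section~\ref{sec:diag} or by quoting the corresponding estimate from \cite{HaraSladeHighDExpansions}. I expect this to be the only genuine obstacle: the steps above are bookkeeping with binomial coefficients, whereas the naive remark that a lace graph is a walk gives, after the dimensional reduction, only $|c_{a,b}|\le a^{\O(a)}$ — hopelessly weak, since already the single‑block terms $(n_{a,b})$ insert $\pm c_{a,n-1}$ with $a$ as large as $2n-2$ into $\a_n$ — so one must exploit the cancellations built into the inclusion/exclusion structure of the lace expansion to bring the growth down to the exponential rate needed above.
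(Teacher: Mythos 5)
Your reduction of Lemma~\ref{an_bound} to a coefficient of $\exp(h(x))$ is formally correct, but the argument collapses at exactly the point you flag as ``the only genuine obstacle'': the hoped-for bound $|c_{a,b}|\le K^a$ is false, and no cancellation in the inclusion/exclusion structure rescues it. The $c_{a,b}$ grow \emph{factorially} in $b$: what the paper proves (Lemma~\ref{bound_cab}) is $c_b=\sum_a|c_{a,b}|\le C_3^b\,b!$, and this order of growth is essentially attained. For instance, the type-$1$ lace graphs of length $2b$ using $b$ distinct coordinate directions already contribute on the order of $(2b)!/(2^b b!)\sim 2^b b!$ equivalence classes to $c_{2b,b}$, and the signed sum over $N$ is only ever controlled by the triangle inequality. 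The diagrammatic estimates do not deliver an exponential bound either: they come with an explicit factorial (see Lemma~\ref{diagram_estimates}, where the generating function has coefficients $C_4^n s^{-n}\,n!$), reflecting the fact that the number of closed walks of length $2n$ in high dimension is of order $(2d)^n n!$. Consequently $h(x)=\sum|c_{a,b}|x^b$ has radius of convergence zero, the Cauchy estimate $[x^{n-1}]e^h\le C_3^n$ is unavailable, and in fact $[x^{n-1}]e^h\ge c_{n-1}$ is itself of factorial size.

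There is a second, compounding loss in your first step. Bounding the numerator up to $(2n-2)!$ and the denominator down to $n!$ \emph{separately} turns the combinatorial factor into $4^n n!/\prod_I n_{a,b}!$, which for a single-block configuration $(a,b)=(2n-2,n-1)$, $n_{a,b}=1$ overshoots the true value $\frac{a!}{1!\,a!}=1$ by a factor $4^n n!$. Feeding the correct factorial bound on $c_{a,b}$ into your inequality therefore yields only $|\a_n|\lesssim C^n (n!)^2$. The paper avoids this by keeping the exact multinomial coefficient, recognizing
\[
\sum_{(n_{a,b})\in S_n}\frac{(2n-2)!}{\prod_I n_{a,b}!\,(2n-1-\sum_I n_{a,b})!}\prod_I|c_{a,b}|^{n_{a,b}}
\le\frac{1}{2n-1}\,[\b^{n-1}]\Bigl(1+\sum_b c_b\b^b\Bigr)^{2n-1},
\]
and then invoking a dedicated estimate (Lemma~\ref{factorial_generating_fn}) showing that powers of the factorial generating function $\sum_k k!\,\b^k$ satisfy $[\b^k]\phi(\b)^m\le 6^m k!$. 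That lemma is the real content of the proof: it is what lets a factorially divergent input series $c_b\le C_3^b b!$ produce only a single factorial $n!$ in the output. You would need both corrections -- keep the full multinomial, and replace the analytic Cauchy estimate by a combinatorial bound on coefficients of powers of $\sum_k k!\,\b^k$ -- to close the argument.
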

\noindent This is achieved by bounding $|c_{a,b}|$ in terms of $b$.
\begin{lem} \label{bound_cab}
Let $c_b=\sum_{a=b+1}^{2b}|c_{a,b}|$. There is a constant $C_3$ such that $c_b\le C_3^b b!$
\end{lem}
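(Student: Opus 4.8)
The plan is to prove the estimate combinatorially, working directly from the description of $c_{a,b}$ as a signed count of memory‑$\tau$ lace graphs, with the factorial produced by the Stirling numbers of the first kind that appear when the falling factorial $2d(2d-2)\cdots(2d-2D+2)$ is expanded in powers of $2d$.

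First recall that $c_{a,b}=\sum_{N\ge1}(-1)^{N+1}c_{a,b,N}$, where $c_{a,b,N}$ is the coefficient of $(2d)^{a-b}$ in $\hat\pi^{(N)}_a(0;\tau)=\sum_{D=1}^{\fl2{a}}f_\tau(a,N,D)\,2d(2d-2)\cdots(2d-2D+2)$. Expanding $2d(2d-2)\cdots(2d-2D+2)=\sum_{k=0}^{D}2^{D-k}\sigma(D,k)(2d)^{k}$, with $\sigma(D,k)$ the signed Stirling number of the first kind (so $|\sigma(D,k)|$ counts the permutations of $\{1,\dots,D\}$ with exactly $k$ cycles), and taking $k=a-b$, one obtains
\[
|c_{a,b}|\ \le\ \sum_{D=a-b}^{\fl2{a}}2^{D-(a-b)}\,|\sigma(D,a-b)|\,\sum_{N\ge1}f_\tau(a,N,D),
\]
in which every $D$ occurring is at most $b$, since $a\le 2b$ on $I$. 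The two inputs I would then use are: (i) the Stirling bound $|\sigma(D,k)|\le\frac{D!}{k!}\binom{D-1}{k-1}$, obtained by comparing coefficients in the identity $\sum_{D}|\sigma(D,k)|x^{D}/D!=(-\log(1-x))^{k}/k!$ with the elementary inequality $-\log(1-x)\le x/(1-x)$ on $[0,1)$; and (ii) the shape count $\sum_{N\ge1}f_\tau(a,N,D)\le (2D)^{a}/(2^{D}D!)$, which holds because a lace graph of dimensionality $D$ has length at least $2D$, so its distinct shapes inject into the $(2D)^{a}$ walks of length $a$ in $\ZZ^{D}$ while, as recorded in Section~\ref{sec:reversion}, each symmetry orbit has the full size $2D\cdot(2D-2)\cdots 2=2^{D}D!$.

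Substituting (i) and (ii), the factor $D!$ cancels and, writing $k=a-b$,
\[
|c_{a,b}|\ \le\ \frac{2^{-k}}{k!}\sum_{D=k}^{\fl2{a}}\binom{D-1}{k-1}(2D)^{a}\ \le\ \frac{1}{k!}\,2^{\fl2{a}}\,(2\fl2{a})^{a}\ \le\ \frac{2^{b}}{k!}\,a^{a},
\]
the middle step using $\sum_{D=k}^{\fl2{a}}\binom{D-1}{k-1}=\binom{\fl2{a}}{k}\le 2^{\fl2{a}}$. Now $a=b+k$ with $1\le k\le b$, and $a^{a}\le e^{a}a!$ together with $a!/k!=\binom{b+k}{k}b!\le 2^{b+k}b!$ give $a^{a}/k!\le (2e)^{b+k}b!\le (4e^{2})^{b}b!$, whence $|c_{a,b}|\le (8e^{2})^{b}b!$ uniformly in $a$ and $\tau$. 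Summing the fewer than $b$ terms $a=b+1,\dots,2b$ yields $c_b\le C_3^{b}b!$. The factorial enters exactly once: the quantity $a^{a}$ — which is of size $(b!)^{2}$ when $a\approx 2b$ — is divided by $(a-b)!$, and since $a-b$ and $b$ sum to $a$ the quotient is of size $b!$.

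The one point I expect to need care is the shape‑count (ii): it relies on the lace graphs having full symmetry orbits, i.e.\ no accidental stabiliser in the hyperoctahedral group, and on this being uniform in $\tau$ (which is automatic, memory‑$\tau$ lace graphs forming a subset of all lace graphs) and already accounting for the sum over the lace type $N$. Without the full orbit factor $2^{D}D!$ the same computation would only give $c_b\le C_3^{b}(b!)^{2}$, too weak for the factorial bound we want; and in the Stirling bookkeeping the thing to avoid is discarding the essential $1/k!$ by bounding $|\sigma(D,k)|$ crudely by $D!$.
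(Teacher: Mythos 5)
Your proof is correct and follows essentially the same route as the paper: the same decomposition of $c_{a,b}$ over the dimensionality $D$, the same orbit/shape count $\sum_N f_\tau(a,N,D)\le (2D)^a/(2^D D!)$ as the key input, and the same final balancing of $(a-b)!$ against $b!$ via Stirling's formula. The only cosmetic difference is that you bound the coefficient of $(2d)^{a-b}$ in the falling factorial using unsigned Stirling numbers of the first kind, whereas the paper uses the cruder estimate $\bigl|[s^{b-a}]s^{-1}(s^{-1}-2)\cdots(s^{-1}-2D+2)\bigr|\le [s^{b-a}](s^{-1}+2D)^D=(2D)^{D+b-a}\binom{D}{a-b}$; both deliver the essential factor $1/(a-b)!$.
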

\begin{proof}
The numbers $(c_{a,b})$ are defined in terms of lace graphs with length $a$,
\[
|c_{a,b}|\le \sum_{D=a-b}^\fl2{a} \sum_{N=1}^\oo f_\t(a,N,D)\times\Big|[s^{b-a}] s^{-1}(s^{-1}-2)\dots(s^{-1}-2D+2)\Big|.
\]
The number of lace graphs of length $a$ in $\ZZ^D$ is at most $(2D)^a$, so 
\[
\sum_{N=1}^\oo f_\t(a,N,D) \le \frac{(2D)^a}{ 2^D D!}.
\]
The absolute value of $[s^{b-a}]s^{-1}(s^{-1}-2) \dots(s^{-1}-2D+2)$ is at most
\[
[s^{b-a}](s^{-1}+2D)^D= (2D)^{D+b-a} \binom{D}{a-b},\q D=a-b,\dots,\fl2{a}.
\]
The result follow by Stirling's formula: for some constant $C_3$,
\begin{eqnarray*}
\sum_{a=b+1}^{2b} |c_{a,b}|&\le& \sum_{a=b+1}^{2b} \frac{2^b}{(a-b)!}\sum_{D=a-b}^\fl2{a} \frac{D^{D+b}}{(D+b-a)!}\\
         &\le& \sum_{a=b+1}^{2b}\frac{2^b}{(a-b)!} (1+\fl2{a} -(a-b)) \frac{b^{2b}}{(2b-a)!}\le C_3^b b!\qedhere
\end{eqnarray*}
\end{proof}
\noindent Before we can prove Lemma~\ref{bound_an}, we need a bound on how a power series with factorial coefficients behaves under exponentiation.
\begin{lem}\label{factorial_generating_fn} 
Let $\phi(\b)\equiv \sum_{k=0}^\oo k! \b^k$. Then
\[
[\b^k] \phi(\b)^n \le k! \prod_{j=1}^k (1+(n-1)/j^2)
\le 6^n k!
\]
\end{lem}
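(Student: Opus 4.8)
\textbf{Proof proposal for Lemma~\ref{factorial_generating_fn}.}

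The plan is to prove the recursive inequality
\[
[\b^k]\phi(\b)^n \le k!\prod_{j=1}^k\bigl(1+(n-1)/j^2\bigr)
\]
by induction on $n$, and then to bound the product by $6^n$. First I would record the base case: $\phi(\b)^1$ has $[\b^k]\phi(\b)=k!$, so the bound holds with an empty-looking product that in fact equals $\prod_{j=1}^k 1 = 1$. For the inductive step, write $\phi(\b)^n=\phi(\b)\cdot\phi(\b)^{n-1}$ and expand the coefficient as a convolution,
\[
[\b^k]\phi(\b)^n=\sum_{i=0}^k i!\,[\b^{k-i}]\phi(\b)^{n-1}\le \sum_{i=0}^k i!\,(k-i)!\prod_{j=1}^{k-i}\bigl(1+(n-2)/j^2\bigr).
\]
Since $(n-2)\le(n-1)$, each factor $1+(n-2)/j^2$ is at most $1+(n-1)/j^2$, and for $j=k-i+1,\dots,k$ the factor $1+(n-1)/j^2$ is at least $1$; hence the partial product $\prod_{j=1}^{k-i}$ is at most $\prod_{j=1}^{k}\bigl(1+(n-1)/j^2\bigr)$. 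Factoring that full product out, the step reduces to checking the purely numerical inequality $\sum_{i=0}^k i!\,(k-i)!\le k!\bigl(1+(n-1)/k^2\bigr)$ — but this cannot be right as stated, because the left side does not involve $n$.

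So the real work is to be more careful about which factor of the product to peel off. Instead I would peel off only the single factor indexed by $j=k$: write
\[
\prod_{j=1}^{k-i}\bigl(1+(n-2)/j^2\bigr)\le \frac{1}{1+(n-1)/k^2}\prod_{j=1}^{k}\bigl(1+(n-1)/j^2\bigr)\quad\text{only when }i\ge 1,
\]
and handle the $i=0$ term, which contributes $0!\cdot k!\prod_{j=1}^k(1+(n-2)/j^2)\le k!\prod_{j=1}^k(1+(n-1)/j^2)$, separately. Then the induction closes provided
\[
1+\frac{1}{1+(n-1)/k^2}\sum_{i=1}^{k}\frac{i!\,(k-i)!}{k!}\le 1+\frac{n-1}{k^2},
\]
i.e. provided $\sum_{i=1}^{k}\binom{k}{i}^{-1}\le (n-1)\bigl(1+(n-1)/k^2\bigr)/k^2 \cdot k!/((k-i)!\,i!)$-type bound; the cleanest route is the well-known estimate $\sum_{i=0}^{k}\binom{k}{i}^{-1}\le 2+\O(1/k)$, so that $\sum_{i=1}^{k}i!(k-i)!/k!\le 2+2/k$ for $k\ge 1$, and then verify $2+2/k\le (n-1)$ fails for $n=2$. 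This shows the bound as literally printed needs the convention that the product telescopes against the $n-1$ numerator in a slightly different pairing; I would therefore instead prove it by peeling off the factor indexed by $j=i$ (matching the new factor $i!$ of $\phi$ to the $i$-th slot) rather than $j=k$, which is the combinatorially natural pairing.

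The main obstacle, then, is purely bookkeeping: arranging the convolution sum so that each term $i!\,[\b^{k-i}]\phi(\b)^{n-1}$ is matched against exactly the right sub-block of the target product, so that summing over $i$ produces the clean factor $1+(n-1)/k^2$ (or its analogue) rather than an uncontrolled constant. Once the recursion
\[
[\b^k]\phi(\b)^n\le k!\prod_{j=1}^k\bigl(1+(n-1)/j^2\bigr)
\]
is in hand, the final bound is immediate: $\log\prod_{j=1}^k\bigl(1+(n-1)/j^2\bigr)\le (n-1)\sum_{j=1}^\infty 1/j^2=(n-1)\pi^2/6\le 2(n-1)$, so the product is at most $e^{2(n-1)}\le 6^{n-1}\le 6^n$ since $e^2<6\cdot? $ — here one checks $e^2\approx 7.39$, so instead bound $\sum_{j\ge1}1/j^2=\pi^2/6<1.645$, giving the product $\le e^{1.645(n-1)}< 6^{n}$ because $e^{1.645}<5.2<6$. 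I expect the entire argument to be short once the convolution is organized correctly; no estimate here is delicate beyond this pairing choice.
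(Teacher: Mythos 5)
There is a genuine gap: the first inequality of the lemma is never actually proved. You correctly reduce the claim to
\[
\sum_{l_1+\dots+l_n=k}\ \prod_{i=1}^n l_i!\ \le\ k!\prod_{j=1}^k\bigl(1+(n-1)/j^2\bigr),
\]
but your strategy — induction on $n$ via the two-fold convolution $\phi(\beta)\cdot\phi(\beta)^{n-1}$ — is left incomplete at exactly the point where all the work lies. Your first formulation of the inductive step reduces to $\sum_{i=0}^k\binom{k}{i}^{-1}\le 1+(n-1)/k^2$, which you yourself observe is false (the left side is about $2$); the proposed repair, ``peel off the factor indexed by $j=i$ rather than $j=k$,'' is stated but never executed or verified, and it is not obvious that it closes: for $n=2$, $k=1$ the target inequality holds with equality, so there is no slack to absorb a crude re-pairing. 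Declaring the remaining work ``purely bookkeeping'' does not discharge it, and your aside that the bound ``as literally printed needs the convention that the product telescopes\dots in a slightly different pairing'' wrongly casts doubt on a correct statement.

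The paper avoids this difficulty by inducting on $k$ rather than on $n$, keeping all $n$ parts of the composition at once. Writing the left side as $n^k\,k!\,\EE\bigl[\binom{k}{\X^k}^{-2}\bigr]$ for $\X^k\sim\text{Multinomial}(k;1/n,\dots,1/n)$, and building $\X^{k+1}$ from $\X^k$ by incrementing a uniformly random coordinate, the passage from $k$ to $k+1$ multiplies the relevant quantity by
\[
\sum_{i=1}^n\Bigl(\frac{X^k_i+1}{k+1}\Bigr)^{2}\ \le\ \frac{(k+1)^2+(n-1)}{(k+1)^2},
\]
the supremum over the simplex $\sum_i X^k_i=k$ being attained when one coordinate equals $k$. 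This is precisely where the factor $1+(n-1)/j^2$ comes from — it measures the worst case of concentrating all of $k$ in one slot — and it is the idea missing from your argument. Your treatment of the second inequality ($\pi^2/6<\log 6$, hence the product is at most $6^{n-1}\le 6^n$) is fine and matches the paper.
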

\begin{cor}\label{factorial_cor}
Let $\psi(\b)\equiv \sum_{k=1}^\oo k! \b^k$. Comparing $\psi(\b)$ with $\b\phi(2\b)$ gives
\[
[\b^k] \psi(\b)^n \le 6^k (k-n)!
\]
\end{cor}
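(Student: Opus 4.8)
The plan is to follow the hint in the statement: dominate $\psi$ coefficientwise by $\b\,\phi(2\b)$, and then reduce to Lemma~\ref{factorial_generating_fn}. First I would check the domination. Expanding, $\b\,\phi(2\b)=\sum_{k\ge1}2^{k-1}(k-1)!\,\b^k$, so the inequality $[\b^k]\psi(\b)\le[\b^k]\b\,\phi(2\b)$ for every $k\ge1$ is just $k!\le 2^{k-1}(k-1)!$, i.e.\ $k\le 2^{k-1}$, which is elementary.

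All the series involved have nonnegative coefficients, so this coefficientwise inequality is preserved under raising to the $n$-th power: $[\b^k]\psi(\b)^n\le[\b^k]\big(\b\,\phi(2\b)\big)^n$ for all $k$ (both sides vanishing unless $k\ge n$). Now I would strip off the factor $\b^n$ and undo the dilation, $[\b^k]\big(\b\,\phi(2\b)\big)^n=[\b^{k-n}]\phi(2\b)^n=2^{k-n}\,[\b^{k-n}]\phi(\b)^n$, and apply Lemma~\ref{factorial_generating_fn} with its index taken to be $k-n$, which bounds $[\b^{k-n}]\phi(\b)^n\le 6^n(k-n)!$. Combining the three displays gives $[\b^k]\psi(\b)^n\le 2^{k-n}6^n(k-n)!$, and since $2^{k-n}6^n\le 6^{k-n}6^n=6^k$ this is the asserted bound.

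The argument is entirely routine; if there is anything to watch, it is only the bookkeeping of the degree shift caused by the prefactor $\b^n$---one must evaluate Lemma~\ref{factorial_generating_fn} at $k-n$ rather than at $k$---together with the observation that the stray factor $2^{k-n}$ coming from the dilation $\b\mapsto 2\b$ is harmlessly absorbed by enlarging $6^n$ to $6^k$.
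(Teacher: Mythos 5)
Your proof is correct and is exactly the argument the paper intends by its hint ``comparing $\psi(\b)$ with $\b\phi(2\b)$'': coefficientwise domination $k!\le 2^{k-1}(k-1)!$, preservation under $n$-th powers by nonnegativity, the degree shift to $[\b^{k-n}]\phi(\b)^n$, and absorption of the dilation factor $2^{k-n}$ into $6^k$. Nothing further is needed.
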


\begin{proof}[Proof of Lemma~\ref{factorial_generating_fn}]
Let $l_1,\dots,l_n$ denote non-negative integers. The first inequality is equivalent to,
\be\label{equiv1}
\sum_{l_1+\dots+l_n=k}\ \ \prod_{i=1}^n\ l_i! \le k! \prod_{j=1}^k (1+(n-1)/j^2).
\ee
We will show this by induction in $k$. 
The second inequality is a simple consequence of the fact that $\sum_{j=1}^\oo 1/j^2 = \pi^2/6 < \log 6$.

For convenience, \eqref{equiv1} can be written in terms of a multinomial random variable $\X^k\equiv(X^k_1,\dots,X^k_n)\sim \text{Multinomial}(k;1/n,\dots,1/n)$:
\[
n^k \EE\left(\binom{k}{\X^k}^{-2}\right) \le \prod_{j=1}^k (1+(n-1)/j^2), \qq \binom{k}{\X^k}=\frac{k!}{X^k_1!\dots X^k_n!}.
\]
For the inductive step, we construct $\X^{k+1}$ from $\X^k$ by adding $1$ to one of $X_1^k,\dots,X_n^k$ uniformly at random. 
Let $\e_1=(1,0,0,\dots)$, $\e_2=(0,1,0,\dots)$, and so on.
The inductive step is then,
\begin{eqnarray*}
n\EE\left[\binom{k+1}{\X^{k+1}}^{-2}\right] &=& \EE\sum_{i=1}^n  \binom{k+1}{\X^k+\e_i}^{-2}\\
 &=& \EE\sum_{i=1}^n \left( \frac{k+1}{X^k_{i}+1}\right)^{-2}\binom{k}{\X^k}^{-2}\\
 &\le& \frac{(k+1)^2+(n-1)}{(k+1)^2} \EE\left[\binom{k}{\X^k}^{-2}\right].
\end{eqnarray*}
The inequality in the last step is the result of replacing $\sum_{i=1}^n \big( \frac{k+1}{X^k_i+1}\big)^{-2}$ with its supremum over the range of $\X^k$.
\end{proof}
\begin{proof}[Proof of Lemma~\ref{bound_an}]
For $(a,b)\in I$, $a\le 2b$. By Lemma~\ref{an_bound}, 
\begin{eqnarray*}
|\a_n| &\le& \sum_{(n_{a,b})\in S_n}\frac{(\sum a n_{a,b})!}{\prod n_{a,b}!(1+\sum(a-1)n_{a,b})!}\prod |c_{a,b}|^{n_{a,b}}\\
&\le& \sum_{(n_{a,b})\in S_n}\frac{(2n-2)!}{\prod n_{a,b}!(2n-1-\sum n_{a,b})!}\prod |c_{a,b}|^{n_{a,b}}\\
&\le& \frac1{2n-1}[\b^{n-1}]\left(1+\sum_b c_b \b^b\right)^{2n-1}.
\end{eqnarray*}
By Lemma~\ref{bound_cab} and Lemma~\ref{factorial_generating_fn}, $|\a_n| \le 6^{2n}  C_3^n n!$
\end{proof}

\section{Diagrammatic estimates\label{sec:diag}}
There is a number $C_\text{HS}$ \cite{HaraSladeHighDExpansions} such that for $d$ sufficiently large, for all $\t$, 
\be\label{C_HS}
\hat{\Pi}_{\bt}^{(N)}(0;\t)\le(sC_\text{HS})^N.
\ee
This is a consequence of the diagrammatic estimates \cite[Theorem~4.1]{SladeLectureNotes} for the number of type-$N$ lace graphs. The estimates can also be used to bound above the number of lace graphs according to length. 
\begin{lem}\label{diagram_estimates}
There is a constant $C_4$ such that,
\[
\hat{\pi}_a^{(N)}(0;\t) \le  [\b^a] \left(\sum_{n=1}^{\t/2} C_4^n s^{-n} n! \b^{2n}(1+s/\b)\right)^N.
\]
\end{lem}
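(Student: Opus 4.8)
The plan is to bound $\hat\pi^{(N)}_a(0;\t)$ by decomposing a type-$N$ lace graph into $N$ sub-diagrams in the standard way the diagrammatic estimates of \cite[Theorem~4.1]{SladeLectureNotes} organise the bound on $\hat\Pi^{(N)}_{\bt}(0;\t)$, and then keeping track of length rather than just of the power of $s=1/(2d)$. Recall that $\hat\Pi^{(N)}$ is estimated by a product of $N$ "loops", each loop being (roughly) a sum over simple-random-walk-type two-point functions joining consecutive slalom points, and that the estimate \eqref{C_HS} is obtained by bounding each loop by $sC_\text{HS}$. The new ingredient is that a loop of a lace graph with dimensionality contributing at order $s^{-n}$ relative to its length uses at least $2n$ steps (a loop of dimensionality $D$ has length $\ge 2D$, and contributes a factor that is a polynomial in $2d=s^{-1}$ of degree $D$), and, just as in the proof of Lemma~\ref{bound_cab}, the number of such loops of a given length is controlled by $(2D)^a/(2^D D!)$, which after Stirling produces the factorial $n!$ and a geometric constant $C_4^n$.

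Concretely, I would first isolate the single-loop generating function: show that the length-graded contribution of one loop to the diagrammatic bound is dominated, coefficient by coefficient in $\b$, by
\[
L(\b):=\sum_{n=1}^{\t/2} C_4^n s^{-n} n!\, \b^{2n}(1+s/\b),
\]
for a suitable $C_4$. The factor $\b^{2n}$ records that such a loop spends at least $2n$ steps; the polynomial-in-$2d$ factor of degree $n$ coming from the lattice symmetries (as in Lemma~\ref{bound_cab}) is absorbed into $s^{-n}$ together with a power of $2$ and the binomial $\binom{D}{a-b}$, which is bounded by a constant to the $n$; the bound $\sum_N f_\t(a,N,D)\le (2D)^a/(2^D D!)$ and Stirling convert $D^{D}$-type growth into $n!\,C_4^n$; and the extra summand $s/\b$ accommodates the two possibilities in the diagrammatic expansion for how the final loop closes up (the "$(1+s/\b)$" factor is exactly what appears when one allows either a full loop or a loop short by one step at the end of the chain, matching the bookkeeping in \cite{SladeLectureNotes}). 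Once the single-loop bound $L(\b)$ is in place, the type-$N$ estimate is just the statement that the $N$ loops are concatenated, so the length-$a$ count is bounded by $[\b^a]L(\b)^N$, which is the claim.

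The main obstacle is the first step: extracting from the diagrammatic machinery of \cite[Theorem~4.1]{SladeLectureNotes} a length-graded (i.e.\ $\b$-indexed) bound rather than the cruder bound \eqref{C_HS} that only tracks the total power of $s$. In particular one must verify that the combinatorial factor counting loops of dimensionality $D$ and length $a$ really is no worse than $(2D)^a/(2^D D!)$ times the polynomial-degree-$D$ symmetry factor, and that the resulting per-loop generating function does not exceed $L(\b)$ after the constants are fixed — this is essentially a re-run of the estimates behind Lemma~\ref{bound_cab}, but now carried inside the lace-expansion diagrams rather than applied to the already-resummed $c_{a,b}$. The constraint $n\le\t/2$ in the sum is automatic because a memory-$\t$ walk cannot form a loop using more than $\t$ steps, so loops of dimensionality exceeding $\t/2$ do not occur. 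Everything else (concatenating $N$ loops, passing to coefficients) is routine once the single-loop bound is established.
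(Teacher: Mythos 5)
Your architecture matches the paper's: a single-loop generating function raised to the $N$-th power, with the ``at least $2n$ steps per $n$-dimensional loop'' observation supplying $s^{-n}n!$ via Stirling. But the step you yourself flag as the main obstacle is left unresolved, and the mechanism you propose for it is aimed at the wrong object: $f_\t(a,N,D)$ and the symmetry factor $2d(2d-2)\cdots(2d-2D+2)$ count equivalence classes of \emph{entire} lace graphs and belong to the proof of Lemma~\ref{bound_cab}, not to the diagrammatic estimates. The paper's resolution is that \cite[Theorem~4.1]{SladeLectureNotes} already delivers a length-graded bound, with no further extraction needed: $\hat\pi_a^{(N)}(0;\t)\le[\b^a](F_\b^\t)^N$ where
\[
F_\b^\t=\sum_{k=1}^\t \b^k\sup_{x\in\ZZ^d}k\,c^{(0)}_k(x),
\]
the per-loop factor being a memory-$0$ (simple random walk) two-point function. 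The remaining combinatorics therefore concerns single walks, not lace graphs: for even $k=2m\le 2d$ one uses Hammersley's bound $c^{(0)}_{2m}(0)\le\binom{d}{m}(2m)^{2m}$ \cite{HammersleySankhya} (a $2m$-step closed walk lies in an $m$-dimensional coordinate subspace), which is where your dimensionality count rightly belongs and which yields $2m\,c^{(0)}_{2m}(0)\le C_4^ms^{-m}m!$; the regime $2m>2d$ is handled separately by $c^{(0)}_{2m}(x)\le(2d)^{2m}$.

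Your reading of the factor $(1+s/\b)$ is also off: it does not encode ``how the final loop closes up'' but simply the odd-length terms of $F_\b^\t$. For odd $k=2m-1$ the supremum of $c^{(0)}_k(x)$ is attained at a nearest neighbour $e_1$, and $c^{(0)}_{2m-1}(e_1)=c^{(0)}_{2m}(0)/(2d)=s\,c^{(0)}_{2m}(0)$, which produces exactly the $\b^{2n}(s/\b)$ summand. With these two corrections --- take the per-loop object to be the walk two-point function already supplied by the diagrammatic estimates, and split its bound according to the parity of $k$ --- your outline becomes the paper's proof.
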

\begin{proof}
The loops of a memory-$\t$ lace graphs have length at most $\t$. 
Define a generating function by 
\[
F_\b^\t=\sum_{k=1}^\t \b^k\sup_{x\in\ZZ^d} k c^{(0)}_k(x),
\]
where $c^{(0)}_k(x)$ denotes the number of \saw{s} from $0$ to $x$ of length $k$.
The diagrammatic estimates, see for example \cite[Theorem~4.1]{SladeLectureNotes}, imply that
\[
\hat{\pi}_a^{(N)}(0;\t)\le [\b^a]\, (F_\b^\t)^{N}.
\]
Let $C_4=20$.  The result follows when we show that for $k=1,\dots,\t$,
\[
\sup_{x\in\ZZ^d} k c^{(0)}_k(x) \le [\b^k] \sum_{n=1}^{\t/2} C_4^n s^{-n} n! \b^{2n}(1+s/\b).
\]
When $k$ is even, $c_k^{(0)}(x)$ is maximized by $x=0$.
First suppose $k=2m\le 2d$. The number of simple walks from $0$ to $0$ in $\ZZ^d$ of length $2m$ is less than the number of $2m$ step walks that are restricted to an $m$ dimensional subspace \cite{HammersleySankhya}: 
\[
c^{(0)}_{2m}(0)\le \binom{d}{m} (2m)^{2m} \le \frac{d^m}{m!}(2m)^{2m}.
\]
If $k=2m>2d$, $c^{(0)}_{2m}(x)\le (2d)^{2m}$. By Stirling's formula,
\[
\forall m, \qq (2m) c^{(0)}_{2m}(0)\le C_4^m s^{-m}m! 
\]
When $k$ is odd, $c_k^{(0)}(x)$ is maximized by $x=e_1=(1,0,\dots,0)$ and
\[
c^{(0)}_{2m-1}(e_1)= c^{(0)}_{2m}(0)/(2d).\qedhere
\]
\end{proof}

\section{Proof of Theorem~\ref{SAWbound}\label{sec:proof}}
It is well known that $d\le \mu\le 2d$, and hence that $s\le \bc\le 2s$. Let $C_5\in[0,C_2^{-1}]$ and suppose that $C_1 \ge10 C_2/C_5$.
Then for $s\ge C_5/M$, inequality \eqref{SAWboundEqn} holds simply by Lemma~\ref{bound_an}.
We will show by induction that for some constants $C_5$ and $C_6$, for $k=1,2,\dots$ and $s\le C_5/k$,
\be\label{C8}
\bc=\sum_{n=1}^{k-1} \a_n s^n+E_k s^k \text{ with } |E_k|\le C_6^k k!
\ee
Theorem~\ref{SAWbound} follows from \eqref{C8} by taking $C_1= \max\{C_6,10 C_2/C_5\}$.

To begin the induction process, notice that $E_1=\bc/s\in[0,2]$, so \eqref{C8} holds for $k=1$ if $C_6\ge 2$.
To keep the remainder of the proof simple, we will now assume that $C_6\gg 1$, so we can take it for granted that $C_6\ge 2$.  We will also assume that $C_5 \ll C_6^{-3}\ll 1$.

Assume inductively that \eqref{C8} holds for $k=1,\dots,M$.
Let $s\le C_5/(M+1)$. We need to show that
\[
|E_{M+1}| \le C_6^{M+1} (M+1)!
\]
We will define $(A_i)_{i=1}^4$ such that
\[
E_{M+1}s^{M}=\sum_{i=1}^4 A_i, \q |A_i| \le \tfrac14s^{M}C_6^{M+1} (M+1)!
\]
We will use the Lace expansion. A type-$N$ memory-$\t$ lace graph has length at most $N\t$, so it is easier to use the finite memory version of the Lace expansion.
If the memory $\t$ is high enough, we can use $(\a_n)_{n=1}^M$ to refer to the first $M$ coefficients of the series expansions for both $\bc$ and $\bt$. 
Let $\t=2M$. By \cite[Theorem 1]{KestenSAWii}, there is a constant $C_\text{K}$ such that
\[
\forall s\le 1/(52M),\ 0\le \bc-\bt \le  s^{M+2} C_\text{K}^M M!
\]
Let $A_1=\bc-\bt$, and let $E^\t_M=E_M-A_1$ so
\be\label{bt}
\bt=\sum_{n=1}^{M-1}\a_n s^n + E^\t_M s^M. 
\ee
By \eqref{identity}, we must now choose $A_2,A_3,A_4$ such that
\be\label{bt2}
A_2+A_3+A_4+\sum_{n=1}^M \a_ns^{n-1}=
1- \hat{\Pi}_{\bt}(0;\t).
\ee
With reference to the bound \eqref{C_HS}, let 
\[
A_2=- \sum_{N=M+1}^\oo (-1)^N \hat{\Pi}_{\bt}^{(N)}(0;\t).
\]
Let $A_3$ match the terms generated on the right hand side of \eqref{bt2} by lace graphs of length $a\ge 2M$ and type $N\le M$,
\[
A_3 = - \sum_{N=1}^M (-1)^N \sum_{a=2M}^{2MN} \hat{\pi}^{(N)}_a(0;\t) \bt^a.
\] 
By Lemma~\ref{diagram_estimates},
\begin{eqnarray*}
|A_3|&\le& \sum_{N=1}^M \sum_{a=2M}^{2MN} \bt^a [\b^a]  \left(\sum_{n=1}^{M} C_4^n s^{-n} n!  \b^{2n}(1+s/\b)\right)^N
\\
&\le& \sum_{N=1}^M (1+s/\bt)^N \sum_{a=2M}^{2MN} \bt^a [\b^a]  \left(\sum_{n=1}^{M} C_4^n s^{-n} n!  \b^{2n}\right)^N.
\end{eqnarray*}
Setting $x=\b^2$, the right hand side above is equal to 
\begin{eqnarray*}
\sum_{N=1}^M (1+s/\bt)^N \sum_{a=M}^{MN} (C_4M\bt^2 s^{-1})^a [x^a] \left( \sum_{n=1}^{M} n!  x^n/M^n\right)^N.
\end{eqnarray*}
We can assume that $(1+s/\bt)C_4M\bt^2s^{-1}\le C_6e^{-1}Ms\le 1$. The above is less than
\begin{eqnarray*}
&&\sum_{N=1}^M\ \sum_{a=M}^{MN} (C_6e^{-1}M s)^{a} [x^a] \left( \sum_{n=1}^{M} x^{n} n!/M^n\right)^N\\
&&\le (C_6e^{-1}Ms)^M\sum_{N=1}^M \left( \sum_{n=1}^{M} n!/M^n\right)^N\le \tfrac14 s^{M} C_6^{M+1} (M+1)!
\end{eqnarray*}
By the process of elimination, $A_4$ is now defined by
\be\label{bt3}
A_4+\sum_{n=1}^M \a_ns^{n-1}=1-\sum_{N=1}^M(-1)^N\sum_{a=2}^{2M-1} \hat{\pi}^{(N)}_a(0;\t) \bt^a.
\ee
Substitute \eqref{bt} into the right hand side of \eqref{bt3}; by Lemma~\ref{an_bound}, we can cancel the powers of $s$ below $s^M$:
\[
A_4 = - \sum_{n=M}^\oo s^n [s^n] \sum_{N=1}^M (-1)^N \sum_{a=2}^{2M-1}\hat{\pi}^{(N)}_a(0;\t)\left(\sum_{n=1}^{M-1}\a_n s^n + E^\t_M s^M\right)^a.
\] 
Recall that $c_b := \sum_{a=b+1}^{2b} |c_{a,b}|$. As $\a_1=1$, 
\begin{eqnarray*}
|A_4|&\le& \sum_{n=M}^\oo s^n [s^n]\ \sum_{a=2}^{2M-1}\sum_{b=\cl2{a}}^{a-1} |c_{a,b}|s^{b-a} \left( \sum_{k=1}^{M-1}|\a_k| s^k + |E^\t_M| s^M\right)^{a}\\
     &\le&\sum_{n=M}^\oo s^n [s^n]\ \sum_{b=1}^{2M-2} c_{b}s^{-b} \left( \sum_{k=1}^{M-1}|\a_k| s^k + |E^\t_M| s^M\right)^{2b}.  
\end{eqnarray*}
The $\a_n$ are controlled by Lemma \ref{bound_an}, and the $c_{b}$ are controlled by Lemma \ref{bound_cab}.
By the inductive assumption and the bound on $|A_1|$, we have $|E^\t_M |\le 2C_6^M M!$ 
Expand the $(\,\cdot\,)^{2b}$ terms above: we will split the resulting terms into three groups.
\begin{romlist}
\item The terms $s^n c_{b}|\a_{i_1}||\a_{i_2}|\cdots|\a_{i_{2b}}|$ with $M\le n\le 2M-2$.
\item The terms $s^n c_{b}\dots$ with $M\le n\le 2M-2$ that are not in group (i) because they contain an $|E^\t_M|$ term.
\item The terms $s^n c_{b}\dots$ with $n\ge 2M-1$.
\end{romlist}
By Lemma~\ref{bound_an}, the contribution from the first group is less than
\be\label{sum_1}
\sum_{n=M}^{2M-2} s^n C_2^{n+1} (n+1)!  
\ee
The contribution of the second group is at most
\begin{eqnarray}\label{sum_2}
2 C_6^M M! s^M \sum_{n=0}^{M-2} s^n [s^n] \sum_{b=1}^{n+1} C_3^b b! s^{-b} \times (2b) \left(\sum_{k=1}^{M-1}C_2^k k! s^k \right)^{2b-1}.
\end{eqnarray}
For $k=1,\dots,M$, define $\hat{\a}_k$ by
\[
\hat{a}_ks^k=|\a_k|s^k+\dots+|\a_{M-1}|s^{M-1}+|E^\t_M|s^M.
\]
The contribution of the third group is at most
\begin{eqnarray}\label{sum_3}
s^{2M-1} [x^{2M-1}] \sum_{b=1}^{2M-2} C_3^b b! x^{-b} \left( \sum_{k=1}^{M}\hat{\a}_k x^k \right)^{2b}.
\end{eqnarray}
By having the $\hat{\a}_kx^k$ in the $(\,\cdot\,)^{2b}$ term, we catch all the terms in the third group while extracting only the coefficient of $x^{2M-1}$. [For example if $M=10$, the term $s^{28}c_{2}\a_6\a_7\a_8\a_9$ is accounted for by the term $s^{19}c_{2}\hat{\a}_6\hat{\a}_7\hat{\a}_7\hat{\a}_1 =$
\[
s^{19} c_2 (|\a_6|+ \dots)(|\a_7|+\dots)(\dots+|\a_8|s+\dots)(\dots+|\a_9|s^8+\dots) 
\]
generated by \eqref{sum_3}.] We can assume that $\hat{\a}_k \le (2C_6)^k k!$

The absolute value of $A_4$ is now bounded by the sum of three intimidating expressions, \eqref{sum_1}--\eqref{sum_3}. However, under suitable conditions on $C_5$ and $C_6$, they are quite easily seen to be small when compared to $\tfrac14 s^M C_6^{M+1}(M+1)!$ In the case of \eqref{sum_1}, $C_6\gg C_2$ and $C_2C_5\ll 1$ is sufficient. Corollary \ref{factorial_cor} allows us to control \eqref{sum_2} and \eqref{sum_3}. If $C_2^2C_3 C_5\ll 1$ and $C_6\gg C_2C_3$, \eqref{sum_2} is small. If $C_3^2 C_5C_6^3\ll 1$, \eqref{sum_3} is small.
This completes the proof of Theorem \ref{SAWbound}.

\section*{Acknowledgement}
The author would like to thank Gordon Slade for many helpful discussions and the Fondation Sciences Math\'ematiques de Paris for financial support.

\bibliography{borel}
\bibliographystyle{plain}

\end{document}